\newtheorem{theorem}{Theorem}[section]
\newtheorem{lemma}[theorem]{Lemma}
\theoremstyle{definition}
\newtheorem{definition}[theorem]{Definition}
\theoremstyle{remark}
\numberwithin{equation}{section}
\def\rrho{\mbox{\boldmath$\rho$}}
\def\rrhoh{\mbox{\boldmath$\hat\rho$}}
\def\x{\textit{\textbf{x\/}}}
\def\k{\textit{\textbf{k\/}}}
\def\C{\mathbb{C}}
\def\Oe{O_{\varepsilon}}
\def\cj{c_j^{}}
\def\Erho{E_\rho^{}}
\def\dE{d_E^{}}
\def\dT{d_T^{}}
\def\dmax{d_{\max{}}^{}}
\begin{document}

\title[Approximation in the hypercube]
{Multivariate polynomial approximation in the hypercube}

\author[Trefethen]{Lloyd N. Trefethen}
\address{Mathematical Institute, University of Oxford,
Oxford, OX2 6GG, UK}
\curraddr{}
\email{trefethen@maths.ox.ac.uk}
\thanks{Supported by the European Research Council under the European
Union's Seventh Framework Programme (FP7/2007--2013)/ERC grant
agreement no.\ 291068.\ \ The views expressed in this article
are not those of the ERC or the European Commission, and the
European Union is not liable for any use that may be made of the
information contained here.}

\subjclass[2010]{41A63}

\commby{}

\begin{abstract}
A theorem is proved concerning approximation of analytic
functions by multivariate polynomials in the $s$-dimensional
hypercube.  The geometric convergence rate is determined not
by the usual notion of degree of a multivariate polynomial,
but by the {\em Euclidean degree,} defined in terms of the
2-norm rather than the 1-norm of the exponent vector $\k$ of
a monomial $x_1^{k_1}\cdots \kern .8pt x_s^{k_s}$.
\end{abstract}

\maketitle

\section{Introduction}

The aim of this paper is to prove a theorem concerning an
effect identified in Section 6 of~\cite{iso}.  If an analytic
function $f(\x) = f(x_1^{},\dots,x_s^{})$ is approximated
by multivariate polynomials in the $s$-dimensional hypercube
$[-1,1]^s$, the usual notion of polynomial degree, namely the
total degree, is not the right predictor of approximability.
In the hypercube, the set of polynomials of a given total
degree has $\sqrt s$ times finer resolution along a direction
aligned with an axis than along a diagonal.  Conversely, the
set of polynomials of a given maximal degree has $\sqrt s$
times finer resolution along a diagonal than along an axis.
To achieve balanced resolution in all directions one should
work with polynomials of a given {\em Euclidean degree.}

Our definitions are as follows.  With $s\ge 1$,
we consider functions
$f(\x)= f(x_1^{},\dots,x_s^{})$ in $[-1,1]^s$, with 
$\|\cdot \|_{[-1,1]^s}^{}$ representing the maximum norm over
this set.
For a monomial $x_1^{k_1}\cdots \kern 1pt x_s^{k_s}$ we define
\begin{eqnarray}
\mbox{\em Total degree}: &~~ & \dT = \|\k\|_1^{}, \\[2pt]
\mbox{\em Euclidean degree}: &~~ & \dE =
   \|\k\|_2^{}, \\ [2pt]
\mbox{\em Max degree}: &~~ & \dmax = \|\k\|_\infty^{},
\end{eqnarray}
where $\|\cdot\|_1^{}$, $\|\cdot\|_2^{}$, and
$\|\cdot\|_\infty^{}$ are
the 1-, 2-, and $\infty$- norms
of the $s$-vector $\k = (k_1^{}\dots, k_s^{})$,
and the degree of a multivariate polynomial is the maximum of
the degrees of its nonzero monomial constituents.  The total
and maximal degree definitions are standard and appear
in publications like~\cite{mason} and~\cite{timan} where multivariate
polynomial approximation in the hypercube is discussed,
but the Euclidean degree seems to be new in~\cite{iso}.
Note that $\dE$ is not in general an integer.

Our interest is in leading order exponential effects, not
algebraic fine points, and accordingly, we will make use of the notation
$\Oe$ defined as follows: $g(n) = \Oe(a^n)$ if for all
$\varepsilon>0$, $g(n) = O((a+\varepsilon)^n)$ as $n\to\infty$.
By $\Oe(a^{-n})$ we mean $\Oe((1/a)^n)$, or equivalently,
for all
$\varepsilon>0$, $O((a-\varepsilon)^{-n})$.

\section{Numerical illustration}
The case $s=2$ suffices for a numerical illustration.
Let $f$ be the 2D Runge function
\begin{equation}
f(x,y) = \frac{1}{1+ 10(x^2+y^2)},
\label{runge}
\end{equation}
which is analytic for all real values of $x$ and $y$ and isotropic
in the sense that it is invariant with respect to rotation in
the $x$-$y$ plane.  Figure~\ref{fig} gives an indication of the
minimal error in approximation of $f$ on $[-1,1]^2$ by bivariate
polynomials of various total, Euclidean, and maximal degrees.
(Bivariate Chebyshev coefficients of~$f$ are plotted in
Figure~6.4 of~\cite{iso}.)
The figure is actually based on $L^2$ rather than $L^\infty$ 
approximations, since these are much easier to compute,
but this is enough to give an indication of the
separation between the convergence rates when the degree
is defined by $\dT$ and when it is defined by $\dE$ or $\dmax$.
\begin{figure}
\begin{center}
~\vskip .1in
\includegraphics[scale=.7]{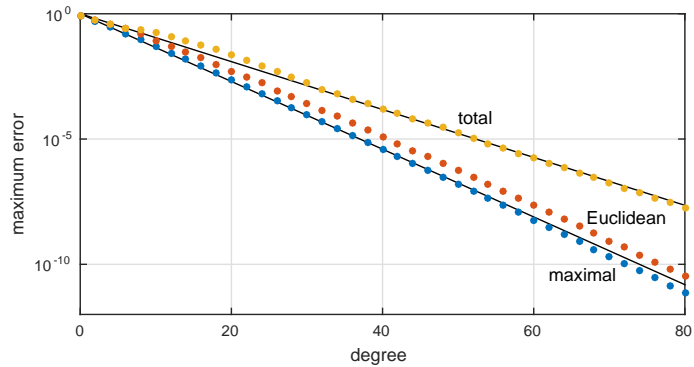}
\end{center}
\caption{Maximum-norm errors in 
approximation of the Runge function\/
$(\ref{runge})$ as a function of degree $n$ in the unit square, for
three different definitions of degree.
The approximations come from least-squares minimization over a
fine grid in $[-1,1]^2$.  Straight lines mark the convergence rates
of Theorem\/~$\ref{thm1}$.}
\label{fig}
\end{figure}

The function (\ref{runge}) satisfies Assumption A of our theorem,
Theorem~\ref{thm1}, with $h^2=0.1$, and the
data in the figure show convincing agreement with
the predictions of the theorem.  This function is analytic when
$x$ and $y$ are real but not when they are complex.  On the other
hand the similar function
\begin{equation}
g(x,y) = \frac{1}{21-10(x^2+y^2)}
\label{runge2}
\end{equation}
has real singularities just outside the unit square.
Theorem~\ref{thm1} applies with
$h^2=0.1$ for this function too, and a plot of convergence
rates (not shown) looks almost exactly like Figure~\ref{fig}.

\section{Chebyshev series in 1D}

In the standard theory for a single variable $x$, for any 
$\rho>1$, let $\Erho$ denote the open set bounded by the
Bernstein $\rho\kern .7pt$-ellipse in the complex $x$-plane, i.e., the image
of the circle $|w|=\rho$ under the map $x = (w+w^{-1})/2$.
A Lipschitz continuous function $f$ defined on
$[-1,1]$ has an absolutely and uniformly
convergent Chebyshev series
\begin{equation}
f(x) = \sum_{k=0}^\infty a_k^{} T_k^{}(x),
\end{equation}
where $T_k^{}$ is the Chebyshev polynomial of degree $k$.
Truncating the series at degree~$n$ gives the polynomial approximation
\begin{equation}
p_n^{}(x) = \sum_{k=0}^n a_k^{} T_k^{}(x).
\end{equation}
The following result goes back to Bernstein's prize-winning
memoir of 1912~\cite{bernstein}.
Here and elsewhere, when we say that $f$ is analytic in a region,
we mean that if necessary $f$ can be analytically continued to that region.

\begin{lemma}
\label{lemma1}
If\/ $f$ is analytic in $\Erho$, its Chebyshev coefficients
and truncated Chebyshev expansions satisfy 
\begin{equation}
a_k^{} = \Oe(\kern .7pt\rho^{-k}), \quad
\|f-p_n^{}\|_{[-1,1]}^{} = \Oe(\kern .7pt\rho^{-n}).
\label{rhoests}
\end{equation}
\end{lemma}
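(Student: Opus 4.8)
The plan is to derive both estimates in~\eqref{rhoests} from a single contour-integral representation of the Chebyshev coefficients, exploiting the analyticity of $f$ in $\Erho$. The starting point is the integral formula $a_k^{} = \frac{2}{\pi}\int_{-1}^1 f(x)\,T_k^{}(x)\,(1-x^2)^{-1/2}\,dx$ (with the factor $1$ instead of $2$ for $k=0$), which one converts to a contour integral in the $w$-plane under the Joukowski map $x=(w+w^{-1})/2$. On the circle $|w|=r$ one has $T_k^{}(x)=\tfrac12(w^k+w^{-k})$, so the coefficient becomes, up to constants, $a_k^{}=\frac{1}{2\pi i}\oint_{|w|=r} f\bigl(\tfrac{w+w^{-1}}{2}\bigr)\,w^{-k-1}\,dw$ plus the symmetric term in $w^{k}$. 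Since $f$ is analytic in $\Erho$, its composition with the Joukowski map is analytic in the annulus $1<|w|<\rho$, and Cauchy's theorem lets me deform the contour to any radius $r$ with $1<r<\rho$.

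First I would bound $|f|$ on the image of $|w|=r$: because $f$ is analytic on the closure of $\Erho$ for every smaller ellipse, $M(r):=\sup_{|w|=r}\bigl|f(\tfrac{w+w^{-1}}{2})\bigr|$ is finite for each $r<\rho$. The standard $ML$-estimate on the contour integral then yields $|a_k^{}|\le C\,M(r)\,r^{-k}$ for a constant independent of $k$. Letting $r\to\rho^-$ shows that for every $\varepsilon>0$ one can choose $r=\rho-\varepsilon'$ to obtain $|a_k^{}|=O((\rho-\varepsilon)^{-k})$, which is exactly the statement $a_k^{}=\Oe(\rho^{-k})$ in the notation defined in the introduction. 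This first estimate is the crux; once it is in hand the second follows easily.

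Second I would pass to the truncation error. Writing $f-p_n^{}=\sum_{k=n+1}^\infty a_k^{}T_k^{}$ and using $\|T_k^{}\|_{[-1,1]}^{}=1$ together with the coefficient bound $|a_k^{}|\le C\,r^{-k}$, I get $\|f-p_n^{}\|_{[-1,1]}^{}\le C\sum_{k=n+1}^\infty r^{-k}=\frac{C}{1-r^{-1}}\,r^{-n}$. The geometric series converges precisely because $r>1$, and the resulting bound is again of the form $O(r^{-n})$ with $r$ arbitrarily close to $\rho$; letting $r\to\rho^-$ gives $\|f-p_n^{}\|_{[-1,1]}^{}=\Oe(\rho^{-n})$, completing the proof.

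The main obstacle is the justification of the contour deformation and the uniform finiteness of $M(r)$ for $r<\rho$, which rests on the convention (stated in the excerpt) that analyticity in $\Erho$ means $f$ admits an analytic continuation there; one must be careful that $\Erho$ is an \emph{open} region, so the supremum is taken only over radii $r$ strictly less than $\rho$, and it is this restriction that forces the $\Oe$ rather than a clean $O(\rho^{-k})$. Everything else is a routine application of Cauchy's integral theorem and the geometric series, so no delicate estimates beyond the single $ML$-bound are required.
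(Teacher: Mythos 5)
Your proposal is correct and follows essentially the same route the paper indicates: the paper's proof is a one-line citation of Theorems 8.1 and 8.2 of the reference \cite{atap}, explicitly mentioning contour integrals over circles in the transformed plane after the change of variables $x=(w+w^{-1})/2$, which is exactly the argument you carry out. Your ML-estimate on a circle $|w|=r<\rho$ followed by the geometric-series bound for the tail is the standard content of those cited theorems, so there is nothing to add beyond the minor bookkeeping slip of a harmless factor of $r$ in the tail sum.
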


\begin{proof}
The second estimate follows from the first, 
whose proof can be based on contour integrals over
$\tilde\rho\kern .7pt$-ellipses with $\tilde \rho= \rho-\varepsilon$
for arbitrarily small $\varepsilon >0$,
or equivalently on contour integrals over
circles in the $z$-plane after a change of variables from $x\in
[-1,1]$ to $z$ on the unit circle.  See Theorems 8.1 and 8.2
of~\cite{atap}.
\end{proof}

For our purposes it will be important to consider $x^2$ as well as
$x$.  When $x$ ranges over $\Erho$, with foci $-1$ and $1$ and
topmost point $ih$, $x^2$ ranges
over another ellipse, with foci $0$ and $1$ and leftmost point
$-h^2$, where $h$ and $\rho$ are related by
\begin{equation}
h = (\kern .7pt\rho-\rho^{-1})/2 , \quad
\rho = h + \sqrt{1+h^2}.
\label{correspondence}
\end{equation}
Arnol'd calls $\Erho$ a {\em Hooke ellipse} and $E_\rho^2$ a {\em
Newton ellipse}~\cite{arnold}.
We wish to parametrize the latter by $h^2$ rather than $\rho$,
so we make the following definition.

\begin{definition}
For any $s,a>0$,
$N_{s,a}^{}$ is the open region in the complex plane bounded by
the ellipse with foci $0$ and $s$ and leftmost point $-a$.
Equivalently, it is the region consisting of points $x$ satisfying
$|x| + |x-s| < s + 2a$.
\end{definition}

\noindent
Thus $E_\rho^2 = N_{1,h^2}^{}$, and
Lemma~\ref{lemma1} can be equivalently restated as follows.

\begin{lemma}
\label{cor1}
Suppose that for some $h>0$, $f(x)$ is analytic for all\/ $x\in\C$
such that $x^2\in N_{1,h^2}$.  Then\/ $(\ref{rhoests})$ holds
with\/ $\rho = h + \sqrt{1+h^2}$.
\end{lemma}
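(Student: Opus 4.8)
The plan is to recognize this lemma as nothing more than Lemma~\ref{lemma1} transported to the $x^2$-plane: the only substantive task is to verify that its analyticity hypothesis describes exactly the same subset of $\C$ as the hypothesis ``$f$ analytic in $E_\rho$'' of Lemma~\ref{lemma1}. Granting that, the conclusion $(\ref{rhoests})$ is immediate.

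Concretely, I would first establish the set identity
$$\{\,x\in\C : x^2\in N_{1,h^2}^{}\,\} = E_\rho, \qquad \rho = h+\sqrt{1+h^2}.$$
The inclusion $\supseteq$ is precisely the correspondence $E_\rho^2 = N_{1,h^2}^{}$ already recorded above: if $x\in E_\rho$ then $x^2\in E_\rho^2 = N_{1,h^2}^{}$. For the reverse inclusion I would invoke the symmetry of the Bernstein ellipse, which has foci $\pm 1$ and centre $0$ and hence satisfies $x\in E_\rho \iff -x\in E_\rho$. Given any $x$ with $y:=x^2\in N_{1,h^2}^{}$, surjectivity of the squaring map $E_\rho\to N_{1,h^2}^{}$ (again the content of $E_\rho^2 = N_{1,h^2}^{}$) furnishes some $x_0\in E_\rho$ with $x_0^2=y$; then $x=\pm x_0$, and by evenness both $\pm x_0$ lie in $E_\rho$, so $x\in E_\rho$.

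With this identity the two hypotheses coincide verbatim, and applying Lemma~\ref{lemma1} with $\rho=h+\sqrt{1+h^2}$ delivers $(\ref{rhoests})$. The one place demanding care is the reverse inclusion above: it is essential that $E_\rho$ be symmetric under $x\mapsto-x$, since for a region lacking this symmetry the preimage of its square under squaring would be strictly larger than the region itself, and the equivalence of hypotheses would fail. Everything else is immediate from the correspondence $(\ref{correspondence})$.
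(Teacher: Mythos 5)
Your proposal is correct and follows essentially the same route as the paper, which simply asserts that Lemma~\ref{cor1} is Lemma~\ref{lemma1} restated via the correspondence $E_\rho^2 = N_{1,h^2}^{}$; you merely make explicit the set identity (including the evenness of $E_\rho$ needed for the reverse inclusion) that the paper leaves implicit.
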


\section{Main theorem}
Now let $f$ be a function of $\x\in [-1,1]^s$ for some
$s\ge 1$.  If $f$ is smooth,
it has a uniformly and absolutely
convergent multivariate Chebyshev series
\begin{equation}
p(\x) = \sum_{k_1=0}^\infty \cdots \sum_{k_s=0}^\infty
a_{k_1,\dots,k_s} T_{k_1}^{}\kern -1pt (x_1) \cdots
T_{k_s}^{}\kern -1pt  (x_s)
\label{chebs}
\end{equation}
(see e.g.\ Theorem 4.1 of~\cite{mason}).
Here is our analyticity assumption, generalizing that 
of Lemma~\ref{cor1}.

\medskip

\noindent{\em {\bf Assumption A.}
For some $h>0$, $f(\x)$ is analytic for all\/ $\x\in \C^s$ in the
$s$-dimensional region defined by the condition
$x_1^2+\cdots + x_s^2 \in N_{s,h^2}^{}$.
}

\medskip

\noindent Note that a sufficient condition for
Assumption A to hold is that $f(\x)$ is analytic for all $\x$
with $\Re (x_1^2 + \cdots + x_s^2) > -h^2$.

The following lemma will be proved in the next section.

\begin{lemma}
\label{lemma2}
If\/ $f$ satisfies Assumption A, its multivariate Chebyshev coefficients
satisfy
\begin{equation}
a_{\k}^{} = \Oe(\kern .7pt\rho^{-\|\k\|_2}),
\label{multicoeffs}
\end{equation}
where\/ $\rho = h + \sqrt{1+h^2}$.
\end{lemma}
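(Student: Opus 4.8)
The plan is to represent each multivariate Chebyshev coefficient as an $s$-fold contour integral and then to deform the $s$ contours outward simultaneously, as in the one-variable argument behind Lemma~\ref{lemma1}, with the $s$ radii chosen adaptively to the exponent vector $\k$. Following the proof of Lemma~\ref{lemma1} (Theorems 8.1--8.2 of~\cite{atap}) in each coordinate, write, with $F(w_1,\dots,w_s)=f\big(\tfrac{w_1+w_1^{-1}}{2},\dots,\tfrac{w_s+w_s^{-1}}{2}\big)$,
\[
a_{\k}=\frac{c_{\k}}{(2\pi i)^s}\oint_{|w_1|=1}\!\!\cdots\oint_{|w_s|=1}F(w_1,\dots,w_s)\,\prod_{j=1}^s w_j^{-k_j}\,\frac{dw_1\cdots dw_s}{w_1\cdots w_s},
\]
where $c_{\k}$ is a harmless constant and the terms arising from the symmetries $w_j\mapsto w_j^{-1}$ are estimated identically. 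Deforming the $j$th contour out to $|w_j|=r_j\ge1$ replaces $x_j$ by a point of the Bernstein ellipse $E_{r_j}$ and multiplies the bound by $r_j^{-k_j}$, giving $|a_{\k}|\le C\exp\big(-\sum_j k_j\log r_j\big)$, \emph{provided} $F$ stays analytic on the polyellipse swept out. By the correspondence $(\ref{correspondence})$, as $x_j$ traverses $E_{r_j}$ the square $x_j^2$ traverses $N_{1,g_j^2}$ with $g_j=(r_j-r_j^{-1})/2$; hence the range of $x_1^2+\cdots+x_s^2$ is the Minkowski sum $\sum_j N_{1,g_j^2}$, and the deformation is legitimate, by Assumption~A, as long as $\sum_j N_{1,g_j^2}\subseteq N_{s,h^2}$.

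The heart of the matter is therefore a geometric containment lemma: \emph{if $\sum_j g_j^2\le h^2$, then $\sum_j N_{1,g_j^2}\subseteq N_{s,h^2}$.} I would prove this via support functions. Every region in sight is a filled ellipse centred at its respective midpoint with focal half-distance equal to that midpoint ($1/2$ for $N_{1,g_j^2}$, $s/2$ for $N_{s,h^2}$), so the midpoint contributions to the support functions already match and, using $\cos^2\psi+\sin^2\psi=1$, the containment reduces to
\[
\sum_{j=1}^s\sqrt{\beta_j^2+\tfrac{c^2}{4}}\;\le\;\sqrt{B^2+\tfrac{s^2c^2}{4}},\qquad c=\cos\psi\in[0,1],
\]
where $\beta_j^2=g_j^2(1+g_j^2)$ and $B^2=h^2(s+h^2)$ are the squared semi-minor axes. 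Squaring and invoking $\sum_j g_j^2\le h^2$, the cross terms collapse so that it suffices to prove the single pairwise inequality $4\big(g_i^2+g_i^4+\tfrac{c^2}{4}\big)\big(g_j^2+g_j^4+\tfrac{c^2}{4}\big)\le\big(g_i^2+g_j^2+2g_i^2g_j^2+\tfrac{c^2}{2}\big)^2$ and sum it over pairs. A direct expansion collapses the difference of the two sides of this pairwise inequality to $(g_i^2-g_j^2)^2(1-c^2)\ge0$. This is the step I expect to be the main obstacle: it is where the sum-of-squares structure of Assumption~A turns into the $\ell^2$ constraint $\sum_j g_j^2\le h^2$, and hence where the Euclidean degree $\dE$ is born.

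Finally I would optimise the free radii. Choosing $g_j=h\,k_j/\|\k\|_2$ makes $\sum_j g_j^2=h^2$, so the choice is feasible, and since $\log r_j=\sinh^{-1}(g_j)$ the bound becomes $|a_{\k}|\le C\exp\big(-\sum_j k_j\sinh^{-1}(g_j)\big)$. Setting $u_j=k_j^2/\|\k\|_2^2$ (so $\sum_j u_j=1$) gives $\sum_j k_j\sinh^{-1}(g_j)=\|\k\|_2\sum_j\Psi(u_j)$ with $\Psi(u)=\sqrt{u}\,\sinh^{-1}(h\sqrt{u})$. The elementary inequality $\sinh^{-1}(w)\ge w(1+w^2)^{-3/2}$ shows $\Psi''\le0$, so $\Psi$ is concave with $\Psi(0)=0$; concavity then yields $\sum_j\Psi(u_j)\ge\Psi(1)=\sinh^{-1}(h)=\log\rho$ because $\Psi(u_j)\ge u_j\Psi(1)$ for each $u_j\in[0,1]$. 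Hence $|a_{\k}|\le C\rho^{-\|\k\|_2}$. Running the same construction with $h$ replaced by $h-\varepsilon$ keeps the contour strictly inside the open region where $f$ is analytic and bounded, which upgrades the estimate to the stated $a_{\k}=\Oe(\rho^{-\|\k\|_2})$ and completes the proof.
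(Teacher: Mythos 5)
Your proof is correct, and its skeleton is the same as the paper's: you choose the anisotropic ellipse parameters $g_j^{} = h\kern .5pt k_j^{}/\|\k\|_2^{}$ so that $\sum_j g_j^2 = h^2$, justify analyticity of $f$ on the resulting elliptic polycylinder via a Minkowski-sum containment of Newton ellipses, and convert $\sum_j k_j^{}\log r_j^{}$ into $\|\k\|_2^{}\log\rho$ by a concavity argument that is exactly Lemma~\ref{lemma4} in disguise (your $\Psi(u)\ge u\kern 1pt\Psi(1)$ is $\sinh^{-1}(ch)\ge c\sinh^{-1}(h)$ with $c=\sqrt{u}$). Where you genuinely diverge is in how the two ingredients are established. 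First, you rederive the anisotropic coefficient bound from an $s$-fold contour integral; the paper instead quotes it as Lemma~\ref{lemma3}, citing Theorem~11 of~\cite{bochmart}, which spares you the bookkeeping with the constants $c_{\k}^{}$ and the $w_j^{}\mapsto w_j^{-1}$ symmetries, though your self-contained route is perfectly sound. Second --- and this is where you could save the most effort --- the containment you single out as ``the heart of the matter'' needs no support functions: from the focal-sum characterization $|x|+|x-s|<s+2a$ of $N_{s,a}^{}$, the paper's Lemma~\ref{minklem} gives $N_{s,a}^{}\oplus N_{t,b}^{}\subseteq N_{s+t,a+b}^{}$ in two lines simply by adding the defining inequalities and applying the triangle inequality, and iterating yields $N_{1,g_1^2}\oplus\cdots\oplus N_{1,g_s^2}\subseteq N_{s,\kern 1pt g_1^2+\cdots+g_s^2}\subseteq N_{s,h^2}$. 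Your support-function computation does check out (the pairwise difference really does collapse to $(g_i^2-g_j^2)^2(1-c^2)$), but the triangle-inequality route shows that no sum-of-squares identity is hiding in Assumption~A; the $\ell^2$ structure enters only through the choice of the $g_j^{}$ and the concavity step. One small point to tidy in a final write-up: when some $k_j^{}=0$ you get $g_j^{}=0$ and $r_j^{}=1$, a degenerate ellipse, so those coordinates should be given $r_j^{}=1+\delta$; this costs nothing at the $\Oe$ level and is glossed over in the paper as well.
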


Based on this result,
our theorem bounds the convergence rates
of polynomial approximations defined by total, Euclidean,
and maximal degree.

\begin{theorem}
\label{thm1}
If\/ $f$ satisfies Assumption A, then
$$
\inf_{d(\kern .4pt p)\le n}
\| f-p^{}\|_{[-1,1]^s}^{} 
= \begin{cases}
\Oe(\kern .7pt\rho^{-n/\sqrt s}) & \hbox{if\/ } d=\dT, \\
\Oe(\kern .7pt\rho^{-n}) & \hbox{if\/ } d=\dE, \\
\Oe(\kern .7pt\rho^{-n}) & \hbox{if\/ } d=\dmax, 
\end{cases}
$$
where\/ $\rho = h + \sqrt{1+h^2}$.
\end{theorem}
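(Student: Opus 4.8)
The plan is to obtain each of the three bounds by choosing $p$ to be the partial sum of the Chebyshev series~(\ref{chebs}) consisting of exactly those monomials $T_{k_1}^{}(x_1)\cdots T_{k_s}^{}(x_s)$ whose degree $d(\k)$, in the relevant sense $\dT$, $\dE$, or $\dmax$, is at most $n$. Such a $p$ satisfies $d(\kern .4pt p)\le n$ and so bounds the infimum from above. Since $|T_k^{}(x)|\le 1$ on $[-1,1]$, the product of Chebyshev polynomials is bounded by $1$ on $[-1,1]^s$, whence
\begin{equation}
\| f-p^{}\|_{[-1,1]^s}^{} \le \sum_{d(\k) > n} |a_{\k}^{}| .
\end{equation}
By Lemma~\ref{lemma2} each coefficient obeys $|a_{\k}^{}| = \Oe(\kern .7pt\rho^{-\|\k\|_2})$, so everything reduces to estimating a geometric sum of $\rho^{-\|\k\|_2}$ over the tail index set $\{\,d(\k) > n\,\}$.

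The three cases differ only in how the tail set yields a lower bound on $\|\k\|_2$, which I would read off from the standard inequalities $\|\k\|_\infty \le \|\k\|_2 \le \|\k\|_1 \le \sqrt s\,\|\k\|_2$. For the maximal degree, $\dmax > n$ forces some $k_i > n$ and hence $\|\k\|_2 \ge \|\k\|_\infty > n$; for the Euclidean degree the condition $\dE > n$ is literally $\|\k\|_2 > n$; and for the total degree, $\dT > n$ together with $\|\k\|_2 \ge \|\k\|_1/\sqrt s$ gives only $\|\k\|_2 > n/\sqrt s$. Thus in each case the tail sum is dominated by $\sum \rho^{-\|\k\|_2}$ taken over $\{\|\k\|_2 > M\}$ with $M=n$ for $\dE$ and $\dmax$ and $M = n/\sqrt s$ for $\dT$, which would yield the three stated rates $\Oe(\kern .7pt\rho^{-n})$, $\Oe(\kern .7pt\rho^{-n})$, and $\Oe(\kern .7pt\rho^{-n/\sqrt s})$. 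The factor $\sqrt s$ in the total-degree case is precisely the loss incurred along the diagonal $\k = (m,\dots,m)$, for which $\|\k\|_1 = s\|\k\|_\infty$ but $\|\k\|_2 = \sqrt s\,\|\k\|_\infty$.

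The one genuinely quantitative step, and the place I expect the main work, is the tail estimate
\begin{equation}
\sum_{\|\k\|_2 > M} \rho^{-\|\k\|_2} = \Oe(\kern .7pt\rho^{-M}).
\end{equation}
The subtlety is that the number of multi-indices on a shell $\|\k\|_2 \approx m$ grows polynomially in $m$, so one cannot simply bound the sum by its largest term. The clean way to absorb this polynomial factor is to exploit the $\varepsilon$-slack built into the $\Oe$ notation: for any $\delta\in(0,1)$ I would split $\rho^{-\|\k\|_2} = \rho^{-(1-\delta)\|\k\|_2}\rho^{-\delta\|\k\|_2}$, bound the first factor by $\rho^{-(1-\delta)M}$ on the tail, and control the remainder by the convergent majorant $\sum_{\k}\rho^{-\delta\|\k\|_2} \le \sum_{\k}\rho^{-\delta\|\k\|_1/\sqrt s} = \prod_{i=1}^s (1-\rho^{-\delta/\sqrt s})^{-1} < \infty$. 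This gives a bound $C_\delta\,\rho^{-(1-\delta)M}$, and letting $\delta\to 0$ recovers the rate $\Oe(\kern .7pt\rho^{-M})$ in the precise sense of the $\Oe$ definition. Combining this tail estimate with the coefficient bound of Lemma~\ref{lemma2}, and absorbing both $\varepsilon$-slacks into a single one, completes all three estimates.
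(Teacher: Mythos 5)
Your proposal is correct and follows essentially the same route as the paper: truncate the multivariate Chebyshev series, invoke Lemma~\ref{lemma2}, and convert between the three degrees via $\|\k\|_\infty \le \|\k\|_2 \le \|\k\|_1 \le \sqrt s\,\|\k\|_2$ (the paper phrases this as $\dmax(p)\le\dE(p)$ and $\dT(p)/\sqrt s\le\dE(p)$ and deduces the first and third cases from the Euclidean one, rather than rerunning the tail sum three times). Your explicit treatment of the tail estimate $\sum_{\|\k\|_2>M}\rho^{-\|\k\|_2}=\Oe(\rho^{-M})$ via the $\delta$-split is a sound way to fill in a step the paper leaves implicit in the phrase ``by truncating the multivariate Chebyshev series.''
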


{\em Proof of Theorem\/~$\ref{thm1}$, assuming Lemma\/~$\ref{lemma2}$.}
The second (middle) assertion of the theorem follows from Lemma~\ref{lemma2}
by truncating the multivariate Chebyshev series (\ref{chebs}), since
$|T_{k_1}(x_1^{})\cdots T_{k_s}(x_s^{})| \le 1$ for all $\k$ for
all $\x\in [-1,1]^s$.  The third assertion is a consequence
of the second, since $\dmax(\kern .7pt p)\le \dE(\kern .7pt p)$ for
any multivariate polynomial~$p$.  The first assertion is also
a consequence of the second since $\dT(\kern .7pt p)/\sqrt s
\le \dE(\kern .7pt p)$.

\section{Proof of Lemma \ref{lemma2}}
To complete the proof of Theorem~\ref{thm1}
we must prove Lemma~\ref{lemma2}.\ \ For this we 
will make use of a result in the book by Bochner and Martin~\cite{bochmart}.
Let $\rrho = (\kern .7pt \rho_1^{}, \dots, \rho_s^{})$ be an
$s$-vector with $\rho_j^{} > 1$ for each $j$, and let
$E(\kern .7pt\rrho)\subset \C^s$ be the {\em elliptic polycylinder}
defined as the set of all
points $\x\in \C^s$ such that $x_j^{}\in E_{\rho_j}$ for each~$j$.
The result in question is an $s$-dimensional generalization of Lemma~\ref{lemma1}.

\begin{lemma}
\label{lemma3}
Let $f$ be analytic in $E(\kern .7pt\rrho)$.
Then its multivariate Chebyshev coefficients satisfy
\begin{equation}
a_{\k}^{} = \Oe(\kern .7pt \rho_1^{-k_1}\cdots \rho_s^{-k_s})
\label{eq12}
\end{equation}
as $k_1^{}+\dots + k_s^{} \to\infty$. 
\end{lemma}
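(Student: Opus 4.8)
The plan is to reduce the $s$-dimensional statement to the one-dimensional Joukowski/contour argument behind Lemma~\ref{lemma1}, applied in each coordinate through a tensor-product change of variables. First I would record the standard integral representation of the multivariate Chebyshev coefficient,
\[
a_{\k}^{} = c_{\k}^{} \int_{[-1,1]^s}
 f(\x)\, \prod_{j=1}^s T_{k_j}^{}(x_j)
 \prod_{j=1}^s (1-x_j^2)^{-1/2}\, d\x,
\]
where $c_{\k}^{}$ is an explicit normalizing constant depending only on which of the $k_j$ vanish. Substituting $x_j = (w_j + w_j^{-1})/2$ in each variable turns this into a multivariate Laurent (Fourier) coefficient of $F(w_1,\dots,w_s) = f(x_1(w_1),\dots,x_s(w_s))$ over the product of unit circles $|w_j|=1$.

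The key structural fact is that analyticity of $f$ in the elliptic polycylinder $E(\rrho)$ is equivalent, under this change of variables, to analyticity of $F$ on the product of annuli $\{\,w : \rho_j^{-1} < |w_j| < \rho_j\,\}$, since the Joukowski map carries the circle $|w_j| = \tilde\rho_j$ onto the Bernstein ellipse $E_{\tilde\rho_j}$ for every $1 < \tilde\rho_j < \rho_j$. Fixing $\varepsilon>0$ and writing $\tilde\rho_j = \rho_j - \varepsilon$, the function $F$ is analytic, hence bounded by some $M_\varepsilon$, on the closed polyannulus where $|w_j| = \tilde\rho_j$ for each $j$. I would then deform each circular contour $|w_j|=1$ outward to $|w_j| = \tilde\rho_j$, one variable at a time, applying Cauchy's theorem in that variable while the others are held on their deformed circles. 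On the resulting contour the factor contributed by $w_j^{-k_j}$ has modulus $\tilde\rho_j^{-k_j}$, so the integrand is bounded by $M_\varepsilon \prod_{j} \tilde\rho_j^{-k_j}$, giving
\[
|a_{\k}^{}| \le C_\varepsilon \prod_{j=1}^s (\rho_j-\varepsilon)^{-k_j}.
\]
As $\varepsilon>0$ is arbitrary, this is precisely the claimed estimate $a_{\k}^{} = \Oe(\kern .7pt\rho_1^{-k_1}\cdots\rho_s^{-k_s})$.

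The one genuinely multivariate point — and the step I expect to require the most care — is the justification of this iterated contour deformation: one must know that $F$ continues analytically to, and stays bounded on, the \emph{full} closed product of annuli, and that Cauchy's theorem may be applied successively in each variable with the others frozen on arbitrary intermediate circles. This is where the polycylinder hypothesis (analyticity on a genuine product domain, not merely on coordinate slices) is essential, and it is exactly the content supplied by the result of Bochner and Martin cited above. An alternative that sidesteps multivariate Cauchy theory is to apply Lemma~\ref{lemma1} in $x_1$ alone to obtain coefficient functions $a_{k_1}^{}(x_2,\dots,x_s)$, verify via differentiation under the integral sign that each is analytic in the remaining polycylinder and bounded there by $C_\varepsilon\kern .7pt\tilde\rho_1^{-k_1}$ uniformly, and then iterate over the remaining variables; the product bound emerges after $s$ such steps. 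Either route reduces the lemma to the single-variable estimate of Lemma~\ref{lemma1}.
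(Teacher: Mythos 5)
Your proposal is correct and takes essentially the same approach as the paper: the paper disposes of this lemma by citing Theorem~11 of Bochner and Martin~\cite{bochmart}, remarking only that it ``is derived by contour integrals,'' and your tensor-product Joukowski substitution followed by iterated one-variable contour deformation to the circles $|w_j|=\rho_j-\varepsilon$ is precisely that contour-integral argument written out in full. The one point you flag as delicate---justifying the successive deformations on the product of annuli---is handled exactly as you suggest, so there is no gap.
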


\begin{proof}
Equation (\ref{eq12}) means that for any
$\varepsilon>0$,
$a_{\k}^{} = O((\kern .5pt \rho_1^{}-\varepsilon)^{-k_1^{}}
\cdots (\kern .5pt \rho_s^{}-\varepsilon)^{-k_s^{}})$.  This is
essentially Theorem~11 on p.~95 of~\cite{bochmart},
which is derived by contour integrals.  For further
discussion see~\cite{boyd}.
\end{proof}

\begin{proof}[Proof of Lemma\/~$\ref{lemma2}$.]
For any $s$-vector $\k$ of nonnegative indices, define
$$
\cj = \frac{k_j}{\|\k\|_2^{}} \le 1
$$
and
$$
h_j^{} = \cj h .
$$
Then $h_1^2 + \cdots + h_s^2 = h^2$, so by 
Assumption A, $f(\x)$ is analytic in the subset of $\C^s$
defined by the condition $x_1^2+\cdots + x_s^2 \in N_{s,h^2}^{}$.
From Lemma~\ref{minklem} below, we have
\begin{equation}
N_{1,h_1^2}^{} \oplus
\cdots \oplus N_{1,h_s^2}^{} \subseteq N_{s,h_1^2+\cdots h_s^2}^{},
\end{equation}
where $\oplus$ denotes the standard Minkowski sum of sets.  It follows
that $f(\x)$ is analytic whenever $x_j^{}\in N_{1,h_j^2}$ for
each $j$.  In other words,
$f(\x)$ is analytic in the elliptic polycylinder
$E(\kern .7pt\rrhoh)$ with $\hat\rho_j^{}$ defined by 
$$
\hat\rho_j^{} = h_j^{} + \sqrt{1 + h_j^2}
= \cj h + \sqrt{1 + (\cj h)^2}.
$$
It can be shown (Lemma~\ref{lemma4}, below)
this this final quantity is greater than or equal to the number 
$\rho_j^{}$ which we define by
$$
\rho_j^{} = \big(h + \sqrt{1 + h^2}\,\big)^{\cj} =
\rho^{k_j^{}/\|\k\|_2^{}}.
$$
Therefore if $\rrho$ is the $s$-vector
with components given by this formula, then
the associated polycylinder satisfies $E(\kern .7pt\rrho)\subseteq E(\kern .7pt\rrhoh)$, and 
$f(\x)$ is analytic in $E(\kern .7pt\rrho)$.
We now calculate
\begin{equation}
\rho_1^{-k_1}\cdots \rho_s^{-k_s}
= \rho^{-(k_1^2 + \cdots +k_s^2)/\|\k\|_2^{}} = \rho^{-\|\k\|_2^{}},
\label{eq15}
\end{equation}
and inserting this identity in 
(\ref{eq12}) gives (\ref{multicoeffs}), as required.
\end{proof}

Here are the two lemmas just used.

\begin{lemma}
\label{minklem}
For any $s,t>0$ and $a,b>0$,
\begin{equation}
N_{s,a}^{} \oplus N_{t,b}^{} \subseteq N_{s+t,a+b}^{} .
\end{equation}
\end{lemma}

\begin{proof}
If $x \in N_{s,a}^{}$ and $y\in N_{t,b}^{}$, then we have
\begin{equation*}
|x| + |x-s| < s+2a, \quad |y| + |y-t| < t+2b.
\end{equation*}
Therefore by the triangle inequality,
\begin{equation*}
|x+y| + |(x-s)+(y-t)| < (s+2a)+(t+2b),
\end{equation*}
that is,
\begin{equation*}
|x+y| + |(x+y)-(s+t)| < (s+t) + 2(a+b),
\end{equation*}
which implies $x+y \in N_{s+t,a+b}^{}$.
\end{proof}

\begin{lemma}
\label{lemma4}
For any $h\ge 0$ and $c\in [\kern .5pt 0,1]$,
$c\kern .4pt h + \big(1 + c^2h^2\big)^{1/2} \ge \big(h + (1 + h^2)^{1/2}\,\big)^c$.
\end{lemma}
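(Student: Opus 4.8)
The plan is to take logarithms and recognize both sides in terms of the inverse hyperbolic sine. Since both sides of the asserted inequality are positive, $c\kern .4pt h + (1+c^2h^2)^{1/2} \ge (h + (1+h^2)^{1/2})^c$ is equivalent to $\log(c\kern .4pt h + \sqrt{1 + c^2h^2}) \ge c\log(h + \sqrt{1+h^2})$. I would then invoke the identity $\sinh^{-1}(t) = \log(t + \sqrt{1+t^2})$, valid for all real $t \ge 0$, which recasts the target as the much cleaner statement
$$
\sinh^{-1}(c\kern .4pt h) \ge c\,\sinh^{-1}(h).
$$

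The next step is to establish this by concavity. Write $\phi(t) = \sinh^{-1}(t)$, so that $\phi(0) = 0$ and $\phi'(t) = (1+t^2)^{-1/2}$, whence $\phi''(t) = -t(1+t^2)^{-3/2} \le 0$ for $t \ge 0$; thus $\phi$ is concave on $[\kern .5pt 0,\infty)$. Applying concavity to the convex combination $c\kern .4pt h = (1-c)\cdot 0 + c\cdot h$ yields
$$
\phi(c\kern .4pt h) \ge (1-c)\,\phi(0) + c\,\phi(h) = c\,\phi(h),
$$
which is precisely what is needed. Exponentiating recovers the original inequality.

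I expect the only genuine obstacle to be the first step, namely spotting that the two sides are respectively $\sinh^{-1}(c\kern .4pt h)$ and $c\,\sinh^{-1}(h)$. Once the problem is rewritten through $\sinh^{-1}$, the remainder is the standard fact that a concave function vanishing at the origin satisfies $\phi(c\kern .4pt h) \ge c\,\phi(h)$ for $c\in[\kern .5pt 0,1]$. As an alternative to invoking concavity of $\phi$ directly, one could fix $h$ and set $F(c) = \phi(c\kern .4pt h) - c\,\phi(h)$; then $F(0) = F(1) = 0$ and $F''(c) = -c\kern .4pt h^3 (1+c^2h^2)^{-3/2} \le 0$, so $F$ is concave on $[\kern .5pt 0,1]$ and therefore nonnegative throughout, giving the same conclusion.
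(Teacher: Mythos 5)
Your proof is correct and is essentially the same as the paper's: the paper also takes logarithms, observes that $\psi(c)=\log\bigl(c\kern .4pt h+(1+c^2h^2)^{1/2}\bigr)$ vanishes at $c=0$ and has $\psi''(c)=-c\kern .4pt h^3(1+c^2h^2)^{-3/2}\le 0$, and concludes $\psi(c)\ge c\kern 1pt\psi(1)$ by concavity. Recognizing $\psi$ as $\sinh^{-1}(c\kern .4pt h)$ is a pleasant relabeling but does not change the argument.
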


\begin{proof}
Given $h$, define $\varphi(c) = c\kern .4pt h + (1 + c^2h^2)^{1/2}$.  We must show
$\varphi(c) \ge \varphi(1)^c$, or equivalently
\begin{equation}
\psi(c) \ge c \kern 1pt \psi(1), \quad 0\le c \le 1,
\label{convex}
\end{equation}
where
\begin{equation*}
\psi(c) = \log(\varphi(c)) = 
\log\big( c\kern .4pt h + (1 + c^2h^2\kern 1pt )^{1/2} \big) .
\end{equation*}
Since $\psi(0) = 0,$ a sufficient condition for (\ref{convex}) to hold
is that $\psi$ is convex in the sense that 
$\psi''(c)\le 0$ for $c\in [\kern .5pt 0,1]$.
This follows from the identity
$\psi''(c) = - c\kern .4pt h^3\big( 1+ c^2h^2)^{-3/2}$.
\end{proof}

\section{Discussion}
This work is motivated by computational applications, since
for computation in higher dimensions, a hypercube is usually
the domain of choice.  Theorem~\ref{thm1} suggests that any
method for computation in a hypercube that is based on one of the
familiar definitions of the degree of a multivariate polynomial, namely
total degree or maximal degree,
is likely to be suboptimal.  For example, a standard idea of
multidimensional quadrature (cubature) is the exact
integration of multivariate polynomial approximations of
a given total degree, an idea going back to Maxwell~\cite{ku,maxwell}.
For functions $f$ with approximately angle-independent complexity
in the hypercube,
the suboptimality factors may be exponentially large
as $s\to\infty$, since
the number of degrees of freedom increases by
$O((\sqrt s\kern 1.3pt)^s)$
when the degree increases by $\sqrt s$.
For example, in $s=10$ dimensions, our results suggest that
isotropic analytic functions can be resolved to a given accuracy by
polynomials of fixed Euclidean degree with
$11.1$ times fewer degrees of freedom than
polynomials of fixed total degree, and
$401.5$ times fewer degrees of freedom than
polynomials of fixed maximal degree.
We shall not give details
here, since these are discussed in Section~6 of~\cite{iso} with
formulas and a table to quantify the exponential effects.

Theorem~\ref{thm1} is only an upper bound, so in principle, the
difference it suggests between $\dT$ and the other degrees
$\dE$ and $\dmax$ might be illusory.  However, numerical
experiments such as that of Figure~\ref{fig} and those reported
in~\cite{iso} make
it clear that the difference is genuine.  This could
be made rigorous by the development of a converse theorem,
as has been long established in the 1D case, again thanks to Bernstein
(see Theorem~8.3 of~\cite{atap}).  

In closing I would like to highlight a conceptual link
between this note and my earlier paper~\cite{ht} with Nick Hale.
The central observation of~\cite{ht} is that the resolving power
of (univariate) polynomials on an interval $[-1,1]$ is nonuniform, 
making polynomials fall short of optimality by a factor of $\pi/2$
in representing functions whose complexity on $[-1,1]$
is uniform.  In the present work, the issue is again
nonuniformity of polynomials, but now they are
multivariate and the uniformity issue pertains
to rotation rather than translation.  
As pointed out in Section~7 of~\cite{iso},
the translational issue is present in multiple dimensions too.

\section*{Acknowledgments}
I have benefitted from extensive discussions
of approximation and cubature in the hypercube
with Hadrien Montanelli and Klaus Wang.  I thank also Jared Aurentz,
Stefan G\"uttel, Nick Hale,
Allan Pinkus, and Alex Townsend for helpful suggestions.  It was
Aurentz who proposed the term ``Euclidean degree.''

\bibliographystyle{amsplain}

\end{document}